\numberwithin{equation}{section}
\newcommand{\R}{\mathbb{R}}
\newcommand{\fP}{\mathbb{P}}
\newcommand{\E}{\mathbb{E}}
\newcommand{\cB}{\mathcal{B}}
\newcommand{\cD}{\mathcal{D}}
\newcommand{\cE}{\mathcal{E}}
\newcommand{\cF}{\mathcal{F}}
\newcommand{\cH}{\mathcal{H}}
\newcommand{\cL}{\mathcal{L}}
\newcommand{\cP}{\mathcal{P}}
\DeclareMathOperator{\Id}{Id}
\newtheorem{Theorem}{Theorem}[section]
\newtheorem{Proposition}[Theorem]{Proposition}
\newtheorem{Lemma}[Theorem]{Lemma}
\newtheorem{Remark}[Theorem]{Remark}
\newtheorem{Assumption}{Assumption}
\begin{document}

\title{Stability properties of some port-Hamiltonian SPDEs}


\author{Peter Kuchling}
\address[Peter Kuchling]{Faculty of Engineering and Mathematics\\ Bielefeld University of Applied Sciences and Arts, Germany}
\email[Peter Kuchling]{peter.kuchling@hsbi.de}

\author{Barbara R\"udiger}
\address[Barbara R\"udiger]{School of Mathematics and Natural Sciences\\ University of Wuppertal, Germany}
\email[Barbara R\"udiger]{ruediger@uni-wuppertal.de}

\author{Baris Ugurcan}
\address[Baris Ugurcan]{School of Mathematics and Natural Sciences\\ University of Wuppertal, Germany}
\email[Baris Ugurcan]{ugurcan@uni-wuppertal.de}

\date{\today. The final reviewed version of this manuscript was published in Stochastics under \cite{stochastics_publication}.}

\begin{abstract}
 We examine the existence and uniqueness of invariant measures of a class of stochastic partial differential equations with Gaussian and Poissonian noise and its exponential convergence. This class especially includes a case of stochastic port-Hamiltonian equations.
\end{abstract}

\maketitle

\section{Introduction}

Since L. E. Boltzmann we know that the thermodynamics of a gas can be derived from the microscopic dynamics of its interacting particles, each of which following the laws of classical mechanics. The impact with other particles acts as a port, which, due to the high number, is stochastic. The ergodic behavior of interacting particle systems with invariant measures has been studied in statistical mechanics \cite{FR_GLPbook}, \cite{FR_LP63}. By performing a law of large numbers (see e.g. \cite{FR_GLPbook}), the macroscopic equation obtained is the Fokker-Planck equation of a stochastic process. Further scaling limits of stochastic interacting particles take fluctuations around the deterministic macroscopic behavior into account, and are described by SPDEs \cite{FR_GLPbook,FR_DOPT94,FR_DOPT96}, having a form as \eqref{FR_PHSPDE} below. Critical fluctuations are also described by nonlinear SPDEs.

The well-posedness and stability properties of solutions of port-Hamiltonian SPDEs is subject of the research of this article. Not much is known about infinite-dimensional stochastic port-Hamiltonian-systems, see, e.g., \cite{lw_2},   \cite{FR_bookDMSB}.

To introduce stochastic port-Hamiltonian systems, let us review the finite-dimensional deterministic situation. It is well-known that a classical mechanical system can be described by the Hamiltonian equations
\begin{align*}
    \dot{q}&=\frac{d\cH}{dp}(q,p)
    \\
    \dot{p}&=-\frac{d\cH}{dq}(q,p)+F
\end{align*}
Where $q$ and $p$ denote position and momentum of the particles, respectively. $\cH$ is the Hamiltonian, which usually represents the total energy of the system, and $F$ is an outside force. For a conservative system, we have $F\equiv 0$. We note that the system may be written as
\begin{displaymath}
    \dot{x}=J\nabla_x \cH(x)+gF
\end{displaymath}
where $x=(q,p)^T$, $\nabla_x=(\nabla_q,\nabla_p)^T$, $g=(0,\Id)^T$ and
\begin{equation}\label{eq:def_J_id}
    J=\begin{pmatrix}
    0&\Id\\
    -\Id&0
    \end{pmatrix}.
\end{equation}
For the introduction of port-Hamiltonian systems, we remark two aspects:
\begin{enumerate}
    \item The matrix $J$ is skew-symmetric.
    \item Often times, the outside force only acts on the momentum coordinates, i.e. the force is transformed to act on a part of the system.
\end{enumerate}
These aspects make up the core of the theory of port-Hamiltonian systems.

One port that is often included when modelling physical systems is the so-called resistive port. Depending on the setting, it is used to describe e.g. friction or electric resistance. Due to its linear nature, under reasonable assumptions, it can be incorporated into the above system as follows:
\begin{displaymath}
    \dot{x}=A\nabla_x \cH(x)+gF
\end{displaymath}
where $x=(q,p)^T$, $\nabla_x=(\nabla_q,\nabla_p)^T$, $g=(0,\Id)^T$ and
\begin{equation}\label{eq:operatorA_phs}
    A=J-R
\end{equation}
for some positive definite matrix $R$ and a skew-symmetric matrix $J$, which generalizes \eqref{eq:def_J_id}. The definiteness of $R$ is due to the model, since it represents resistance.

The port-Hamiltonian approach presents new techniques exploiting the energy balance or dissipativity of the underlying system, i.e,
\begin{displaymath}
    \frac{d}{dt}\cH(x)=0\quad\text{ or }\quad\frac{d}{dt}\cH(x)\leq 0,
\end{displaymath}
which are due to the algebraic constraints posed on the system. This is part of the reason for the recent popularity of the port-Hamiltonian approach and has led to many works in this area. For an overview of port-Hamiltonian systems, see the survey article \cite{survey_vanderschaft}. For a thorough introduction on deterministic equations of port-Hamiltonian structure, see the monographs \cite{book_jacob} and \cite{book_vanderschaft}.

On the other hand, stochastic port-Hamiltonian systems present a rather new field of research with notable works being done in the last decade \cite{fujimoto_satoh, fang_gao}. The methods are not fully explored yet and pose a wide range of unsolved problems, especially in the infinite-dimensional setting: First works in the direction of port-Hamiltonian SPDEs include the recent articles \cite{lw_1,lw_2}.

To introduce port-Hamiltonian SPDEs, let $H$ be a separable Hilbert space with orthogonal decomposition $H = H_q \oplus H_p$. Let $P_p\colon H\to H$ be the orthogonal projection to $H_p$, i.e., $P_p(q,p):= (0,p)$ for $q \in H_q\,, p \in H_p$.

Let $E$ be a Polish space, and $\gamma\colon H\times E \to H$ be measurable. Furthermore, let $F,\sigma\colon H\to H$ be Borel measurable. Let $(\Omega,\cF,(\cF_t)_t,\fP)$ be a filtered probability space rich enough so that all of the following notions are defined. The equation
\begin{equation}\label{FR_PHSPDE} 
\begin{split}
    dX^x_t&=(A\partial_x\cH(X^x_t) +P_pF(X^x_t) ) dt +  P_p\sigma(X^x_t)dW_t + \int_E  P_p\gamma(X^x_t,\eta)\widetilde{N}(dt,d\eta)
    \\
    X^x_0 &=x \in L^2(\Omega,\mathcal{F}_0,\mathbb{P};H)
\end{split}
\end{equation}
is a port-Hamiltonian SPDE with $W_\cdot$ a $Q$-Wiener process and $\widetilde{N}(dt,d\nu)$ an independent compensated Poisson random measure with compensator $\nu(dt,d\eta)=dt\mu(d\eta)$. Here $Q$ is a non-negative, symmetric, trace-class operator on a sub-Hilbert space $U \subset H$. Let us denote with $L_2^0:=L_2^0(H)$ the Hilbert space of Hilbert-Schmidt Operators from the separable Hilbert space $U_0:=Q^{1/2}U$ to $H$. Furthermore, $A:=J-R$, where
\begin{equation}\label{FR_equ:operatorA}
    R=\begin{pmatrix}
       R_q & 0
       \\
       0 & R_p
       \end{pmatrix}\quad\quad\text{and}\quad\quad J=\begin{pmatrix}
              0 & J_0\\
              -J_0^\ast & 0
             \end{pmatrix},
\end{equation}
\noindent $R_q,R_p$ are positive self-adjoint operators on $H_q$, $H_p$, respectively, $J_0\colon\operatorname{dom}(J_0)\subseteq H_p\to H_q$ such that  $-R$ and $J$ and the closure of $J-R$  generate quasi-contraction semigroups. As in the finite-dimensional case, the restriction of $R$ and $J$ comes from the considered model, as $R$ represents linear resistance and $J$ corresponds to the Hamiltonian structure.
For such systems, it is often assumed that
\begin{displaymath}
    \mathcal{H}(x):=\langle x,\tilde{H}x\rangle,
\end{displaymath}
with $\tilde{H}$ a bounded, positive self-adjoint operator. In this article, we consider the case where $\tilde{H}=\Id$. The energy for the total random system is $\mathbb{E}[\mathcal{H}(X_t)]$.

In \cite{farkas}, a class of SPDEs with multiple invariant measures on a separable Hilbert space $H$ was examined. Here, the case of an invariant subspace $H_0\subset H$ was considered, while the dynamics decayed on the orthogonal complement $H_1=H_0^\perp$. More precisely, it was assumed that there exists of a decomposition of the Hilbert space $H=H_0\oplus H_1$ such that the semigroup generated by the linear part of the SPDE leaves the space $H_0$ invariant, while the dynamics decay on $H_1$. Besides \cite{farkas}, there are other articles which consider SPDEs with similar invariance assumptions, for instance, \cite{vangaans}.

While (port-)Hamiltonian systems describing particle systems have a natural decomposition $H_0=H_q$ and $H_1=H_p$ into position and velocity coordinates, the invariance assumption is not suitable, as the Hamiltonian structure of the equations provides a natural interaction between the position and velocity variables via a skew-symmetric part of the drift. Therefore, we still assume the orthogonal decomposition but drop the assumption of invariance and allow interaction between $H_0$ and $H_1$ through the linear dynamics. The goal of this article is to find conditions under which stability properties of the system can be shown. Namely, we examine the existence and uniqueness of an invariant measure of the SPDE below. This will lead the way for future works to also analyse the case of multiple invariant measures by weakening the assumptions used for uniqueness.

In this article, we consider the equation
\begin{equation}\label{eq:spde_intro}
\begin{split}
    dX_t&=(AX_t+\hat{F}(X_t))dt+\hat{\sigma}(X_t)dW_t+\int_E\hat{\gamma}(X_t,\eta)\tilde{N}(dt,d\eta)
    \\
    X_0&\in L^2(\Omega,\cF,\fP;H)
\end{split}
\end{equation}
where $H=H_0\oplus H_1$ is a Hilbert space with orthogonal decomposition $H_0\perp H_1$ and the operator $A$ has the form
\begin{equation}\label{eq:operatorA_intro}
    A=\begin{pmatrix}
        -R_0&D_0
        \\
        D_1&-R_1
    \end{pmatrix}=:D-R.
\end{equation}
The processes $W$ and $N$ are given as above. As application, we keep in mind the classical system consisting of position and momentum, where the decomposition is denoted by $H_0=H_q$ and $H_1=H_p$.

For the case where $\hat{F}=P_pF$, $\hat{\sigma}=P_p\sigma$ and $\hat{\gamma}=P_p\gamma$ as well as the decomposition of $A$ given by \eqref{FR_equ:operatorA} with the corresponding assumptions on $R$ and $J$, equation \eqref{eq:spde_intro} coincides with \eqref{FR_PHSPDE}. Therefore, the considerations in this work cover the case of a port-Hamiltonian SPDE with Hamiltonian given by $\cH(x)=\frac{1}{2}\langle x,x\rangle$. We will explain this connection in more detail in Section \ref{subs:setting}.

The goals of this article are two-fold. First of all, we argue that the above system is in fact a special case of a so-called stochastic port-Hamiltonian system, motivating a further generalization in this direction in the future. Furthermore, we investigate the stability properties of Equation \eqref{eq:spde_intro}. Namely, we show the following result:
\begin{Theorem}\label{thm:main_intro}
Assume that the operator $A$ is as given in \eqref{eq:operatorA_intro}, where $D$ and $R$ fulfill the following estimate: There exist $\lambda_0,\lambda_1,\beta\in\R$ such that for all $x\in\cD(A)$, written as $x=(x_0,x_1)\in H_0\oplus H_1$,
\begin{align}
    -\langle R_ix_i,x_i\rangle_{H_i}&\leq-\lambda_i\|x_i\|_{H_i}^2\label{eq:condition_R}
    \\
    \langle Dx,x\rangle_H&\leq\beta\|x\|_H^2\label{eq:condition_D}
\end{align}
Also, assume that the functions $\hat{F},\hat{\sigma},\hat{\gamma}$ fulfill the following Lipschitz conditions for some constants $L_F,L_\sigma,L_\gamma$:
\begin{equation}\label{eq:lipschitz_intro}
\begin{split}
    \|\hat{F}(x)-\hat{F}(y)\|_H^2&\leq L_F\|x-y\|_H^2
    \\
    \|\hat{\sigma}(x)-\hat{\sigma}(y)\|_{L_2^0(H)}&\leq L_\sigma\|x-y\|_H^2
    \\
    \int_E\|\hat{\gamma}(x,\eta)-\hat{\gamma}(y,\eta)\|_H^2\mu(d\eta)&\leq L_\gamma\|x-y\|_H^2
\end{split}
\end{equation}
where $\|\cdot\|_{L_2^0(H)}$ denotes the Hilbert-Schmidt norm, and $\gamma$ satisfies the following moment conditions:
\begin{displaymath}
    \int_E\|\hat{\gamma}(0,\eta)\|_H^2\mu(d\eta)<\infty\text{ and }\int_E\|\hat{\gamma}(x,\eta)\|_H^4\mu(d\eta)<\infty\text{ for all }x\in H.
\end{displaymath}
Finally, assume that
\begin{displaymath}
    \varepsilon:=2\big[\min(\lambda_0,\lambda_1)-\beta-\sqrt{L_F}\big]-L_\sigma-L_\gamma>0.
\end{displaymath}
Then the mild solution $(X_t)_{t\geq 0}$ of \eqref{eq:spde_intro} possesses a unique invariant measure $\pi$. Furthermore, $(X_t)_{t\geq 0}$ converges exponentially fast to $\pi$ in Wasserstein 2-distance, i.e.,
\begin{displaymath}
    W_2(\rho_t,\pi)\leq e^{-\frac{\varepsilon}{2}t}W_2(\rho_0,\pi)
\end{displaymath}
where $(\rho_t)_{t\geq 0}$ denotes the law of $(X_t)_{t\geq 0}$ with initial distribution $\rho_0$.
\end{Theorem}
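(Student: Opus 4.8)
\emph{Plan of proof.} The strategy is to establish a one-sided energy estimate for the difference of two solutions in the $L^2(\Omega;H)$ sense, use it to show that the Markov transition semigroup contracts exponentially in $W_2$, and then conclude by a Banach fixed-point argument on the Wasserstein space $(\cP_2(H),W_2)$ of probability measures on $H$ with finite second moment. \emph{Step 1 (well-posedness and the Markov semigroup).} First I would record that \eqref{eq:condition_R}--\eqref{eq:condition_D} give $\langle Ax,x\rangle_H\le(\beta-\min(\lambda_0,\lambda_1))\|x\|_H^2$ for $x\in\cD(A)$, so that $A=D-R$ is quasi-$m$-dissipative and generates a quasi-contraction $C_0$-semigroup; together with the Lipschitz bounds \eqref{eq:lipschitz_intro} and the integrability of $\hat\gamma(0,\cdot)$, a routine Banach fixed-point argument in the space of c\`{a}dl\`{a}g adapted processes with norm $(\sup_{t\le T}\E\|\cdot\|_H^2)^{1/2}$ yields a unique mild solution $(X_t^x)$ for each initial datum (this well-posedness may already be in hand from the earlier sections). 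Uniqueness makes the law of $X_t^x$ a function of the law of $x$ alone, so the solutions define a Markov transition semigroup $(P_t)$, which will act on $\cP_2(H)$ once the second moments are shown to stay finite.

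\emph{Step 2 (the energy estimate).} This is the heart of the matter. For two solutions $X^x,X^y$ the difference $Z_t:=X^x_t-X^y_t$ solves \eqref{eq:spde_intro} with $A$-part $AZ_t$, drift increment $\hat F(X^x_t)-\hat F(X^y_t)$, diffusion $\hat\sigma(X^x_t)-\hat\sigma(X^y_t)$, jump amplitude $\hat\gamma(X^x_t,\eta)-\hat\gamma(X^y_t,\eta)$, and \emph{no} constant term. I would apply Itô's formula to $\|Z_t\|_H^2$ — made rigorous by replacing $A$ with its Yosida approximations $A_n=nA(n-A)^{-1}$, which inherit $\langle A_nx,x\rangle_H\le\omega_n\|x\|_H^2$ with $\omega_n\downarrow\beta-\min(\lambda_0,\lambda_1)$, solving the regularized equations and passing to the limit — take expectations (the stochastic integrals being true martingales thanks to the second- and fourth-moment assumptions on $\hat\gamma$), and estimate
\begin{align*}
  2\langle AZ_t,Z_t\rangle_H&=2\langle DZ_t,Z_t\rangle_H-2\langle R_0Z^0_t,Z^0_t\rangle_{H_0}-2\langle R_1Z^1_t,Z^1_t\rangle_{H_1}\le 2\big(\beta-\min(\lambda_0,\lambda_1)\big)\|Z_t\|_H^2,\\
  2\langle \hat F(X^x_t)-\hat F(X^y_t),Z_t\rangle_H&\le 2\sqrt{L_F}\,\|Z_t\|_H^2,
\end{align*}
using \eqref{eq:lipschitz_intro} also for the Hilbert--Schmidt term and for the compensated-Poisson second-moment term $\int_E\|\hat\gamma(X^x_t,\eta)-\hat\gamma(X^y_t,\eta)\|_H^2\mu(\dm\eta)$. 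This should give
\begin{displaymath}
  \frac{\dm}{\dm t}\E\|Z_t\|_H^2\le\Big[2\big(\beta-\min(\lambda_0,\lambda_1)\big)+2\sqrt{L_F}+L_\sigma+L_\gamma\Big]\E\|Z_t\|_H^2=-\e\,\E\|Z_t\|_H^2,
\end{displaymath}
whence, by Gronwall, $\E\|X^x_t-X^y_t\|_H^2\le e^{-\e t}\E\|x-y\|_H^2$. Running the same computation on $\|X^x_t\|_H^2$ (the constant terms $\hat F(0),\hat\sigma(0),\hat\gamma(0,\cdot)$ now absorbed by Young's inequality) yields $\frac{\dm}{\dm t}\E\|X^x_t\|_H^2\le-\e\,\E\|X^x_t\|_H^2+C_0$ with $C_0<\infty$, so $\sup_{t\ge0}\E\|X^x_t\|_H^2<\infty$ and every law $\rho_t$ indeed lies in $\cP_2(H)$. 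Finally, choosing $(x,y)$ to realize an optimal $W_2$-coupling of $\rho_0,\rho_0'\in\cP_2(H)$ and using $W_2(\rho,\rho')^2=\inf\E\|X-Y\|_H^2$ over couplings $(X,Y)$, the estimate upgrades to $W_2(P_t\rho_0,P_t\rho_0')\le e^{-\e t/2}W_2(\rho_0,\rho_0')$ for all $t\ge0$.

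\emph{Step 3 (conclusion, and the main obstacle).} Since $(\cP_2(H),W_2)$ is complete and $P_1$ is a strict contraction on it (strict because $\e>0$), Banach's theorem gives a unique fixed point $\pi$ (equivalently, the moment bound makes $(\rho_t)_{t\ge0}$ with $\rho_0=\delta_0$ a $W_2$-Cauchy net converging to $\pi$); then $P_1P_s\pi=P_sP_1\pi=P_s\pi$ together with uniqueness forces $P_s\pi=\pi$ for all $s\ge0$, so $\pi$ is invariant, and any invariant measure, being $P_1$-invariant, equals $\pi$. Applying the $W_2$-contraction with $\rho_0'=\pi$ gives $W_2(\rho_t,\pi)\le e^{-\e t/2}W_2(\rho_0,\pi)$, as asserted. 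I expect the main obstacle to be the rigorous justification of Itô's formula for $\|\cdot\|_H^2$ along the \emph{mild} solution: since $X_t$ need not lie in $\cD(A)$ and $A$ is neither bounded nor self-adjoint, the Yosida regularization must be handled with care — one has to prove $L^2$-convergence of the approximating solutions uniformly on compact time intervals and to control the second moment of the Poisson stochastic integral (which is exactly where the $L^4$-integrability of $\hat\gamma(x,\cdot)$ enters) — before the one-sided energy estimates above may be taken for granted; alternatively one invokes an off-the-shelf Itô formula for mild solutions of SPDEs driven by Lévy noise.
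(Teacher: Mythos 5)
Your proposal is correct and follows essentially the same route as the paper: the dissipativity estimate $\langle A(x-y)+\hat F(x)-\hat F(y),\,x-y\rangle\le-\big(\min(\lambda_0,\lambda_1)-\beta-\sqrt{L_F}\big)\|x-y\|^2$ (Proposition \ref{prop:dc}), the resulting exponential mean-square stability, the induced $W_2$-contraction of $p_t^\ast$, and a completeness/fixed-point argument in $(\cP_2(H),W_2)$ yielding existence, invariance and uniqueness (Theorem \ref{thm:uim}). The only difference is one of detail: where you sketch the It\^o/Yosida derivation of $\E\|X_t^x-X_t^y\|^2\le e^{-\varepsilon t}\|x-y\|^2$, the paper simply imports this estimate from \cite{farkas} (Prop.~3.5 there, with the projection set to zero), so no additional ideas are needed beyond what you propose.
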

We are typically interested in the case $\lambda_0,\lambda_1,\beta>0$, i.e., the diagonal terms induce dissipation in form of linear resistance (e.g. friction), while the off-diagonal terms transfer energy between the coordinates.

The article is structured as follows. In Section 2, we recall the concept of a weak solution and place Equation \eqref{eq:spde_intro} in the port-Hamiltonian setting. Finally, we discuss the problem of invariant measures and review the main existence and uniqueness theorem for invariant measures. Section 3 is devoted to the proof of Theorem \ref{thm:main_intro} (Thm. \ref{thm:main_outro}). Additionally, we give comments on typical assumptions on the off-diagonal part $D$ which imply Estimate \eqref{eq:condition_D} (Remark \ref{rk:cond_D}).

\section{Preliminaries}

\subsection{Notation}

Let $H$ be a separable Hilbert space which admits an orthogonal decomposition $H=H_0\oplus H_1$. We use the following notational convention to denote operators on this decomposed space: For any $x\in H$, we write $(x_0,x_1)\in H_0\oplus H_1$, i.e., $x_0\in H_0$ and $x_1\in H_1$. An operator $A\colon\cD(A)\subset H\to H$ can be written in the following way:
\begin{displaymath}
    A=\begin{pmatrix}
    A_{00}&A_{01}\\
    A_{10}&A_{11}
    \end{pmatrix}
\end{displaymath}
where $A_{ij}\colon\cD(A_{ij})\subset H_j\to H_i$. The action of the operator $A$ works similarly to the two-dimensional matrix product, i.e. for an element $x=(x_0,x_1)\in\cD(A)$, we get
\begin{displaymath}
    Ax=\begin{pmatrix}
    A_{00}x_0+A_{01}x_1\\
    A_{10}x_0+A_{11}x_1.
    \end{pmatrix}
\end{displaymath}
We denote by $P_0,P_1\colon H\to H$ the projection operators on the respective subspaces, i.e., for any $x=(x_0,x_1)\in H$,
\begin{displaymath}
    P_0x=(x_0,0)\text{ and }P_1x=(0,x_1).
\end{displaymath}
By abuse of notation, we sometimes view $P_ix$ as an element of $H_i$, $i=0,1$ when it is convenient. Due to the orthogonality $H_0\perp H_1$, it is obvious that
\begin{displaymath}
    \langle P_0x,P_1y\rangle=\langle P_1x,P_0y\rangle=0
\end{displaymath}
for any $x,y\in H$, which also implies that
\begin{displaymath}
    \|x\|_H^2=\|P_0x\|_H^2+\|P_1x\|_H^2.
\end{displaymath}

\begin{Remark}
 By above considerations, it does not matter if we consider the $H_i$- or $H$-norm of an element from $H_i$ ($i=0,1$). Therefore, we will identify these notions whenever it makes sense, and omit the space specifying the norm.
\end{Remark}

\subsection{Setting}\label{subs:setting}

Let $H$ be a separable Hilbert space with orthogonal decomposition $H=H_0\oplus H_1$. Furthermore, let $E$ be a Polish space with $\sigma$-algebra $\cE$. We assume that the functions
\begin{displaymath}
    \hat{\gamma}\colon H\times E\to H,\ \hat{\sigma}\colon H\to H,\text{ and }\hat{F}\colon H \to H
\end{displaymath}
are measurable. Let $(\Omega,\cF,(\cF_t)_t,\fP)$ be a filtered probability space s.t. all notions below are defined. Consider the following SPDE:
\begin{equation}\label{eq:spde}
    \begin{split}
        dX_t&=(AX_t+\hat{F}(X_t))dt+\hat{\sigma}(X_t)dW_t+\int_E\hat{\gamma}(X_t,\eta)\tilde{N}(dt,d\eta)
        \\
        X_0&\in L^2(\Omega,\cF,\fP;H)
    \end{split}
\end{equation}
where $(W_t)_{t\geq 0}$ is a $Q$-Wiener process, $N$ is a Poisson random measure with compensator $\nu(dt,d\eta)=dt\mu(d\eta)$ for some $\sigma$-finite measure $\mu$, and $\tilde{N}(dt,d\eta)=N(dt,d\eta)-\nu(dt,d\eta)$. We assume that the processes $W$ and $N$ are independent. Furthermore, we assume that $(A,\cD(A))$ is of the form \eqref{eq:operatorA_intro} and generates a strongly continuous semigroup.

Under the assumptions of Theorem \ref{thm:main_intro}, Equation \eqref{eq:spde} posesses a mild solution in the following sense: Let $(S(t))_{t\geq 0}$ be the semigroup generated by the operator $A$. Then the process $(X_t)_t$ is called mild solution to \eqref{eq:spde} if the process satisfies $\fP$-a.s. the following equation:
\begin{align*}
    X_t&=S(t)X_0+\int_0^tS(t-s)\hat{F}(X_s)ds+\int_0^tS(t-s)\hat{\sigma}(X_s)dW_s
    \\
    &\hspace{20pt}+\int_0^t\int_ES(t-s)\hat{\gamma}(X_s,\eta)\tilde{N}(ds,d\eta),\ t\geq 0.
\end{align*}
For a more detailed description of the solution theory, see \cite{amr09,ftt10,book_mr14}.

Next, we place our equation in the setting of the port-Hamiltonian theory. Namely, if we assume that the Hamiltonian is given by $\cH(x)=\frac{1}{2}\langle x,x\rangle$, then the port-Hamiltonian SPDE can be viewed as a special case of Equation \eqref{eq:spde}.

Let $A$ be as in \eqref{eq:operatorA_intro} with $D_1=-D_0^\ast$ and $R$ fulfilling condition \eqref{eq:condition_R} with $\lambda_i\geq 0$. Furthermore, assume that
\begin{displaymath}
    \hat{\gamma}\colon H\times E\to \{0\}\oplus H_1,\ \hat{\sigma}\colon H\to \{0\}\oplus H_1,\text{ and }\hat{F}\colon H \to \{0\}\oplus H_1.
\end{displaymath}
Since $\hat{F},\hat{\sigma}$ and $\hat{\gamma}$ only act on the space $H_1$, we may identify them with the projections $P_1F,P_1\sigma$ and $P_1\gamma$. Then the following equation is a port-Hamiltonian SPDE:
\begin{equation}\label{eq:ph_spde}
    \begin{split}
        dX_t&=(A\partial_x\cH(X_t)+P_1F(X_t))dt+P_1\sigma(X_t)dW_t+\int_EP_1\gamma(X_t,\eta)\tilde{N}(dt,d\eta)
        \\
        X_0&\in L^2(\Omega,\cF,\fP;H)
    \end{split}
\end{equation}
where $\partial_x$ denotes the Fr\'echet derivative. Then $R$ is positive and $D=J$ is skew-symmetric, and hence, the linear part given by $A$ has the shape needed for \eqref{eq:operatorA_phs}. The nonlinear drift as well as the noise terms act as ports and only influence a part of the system, as they are projected on the space $H_1$. Note that it is usual to take the energy being $\cH(x)=\frac{1}{2}\langle x,\tilde{H} x\rangle$ for some bounded elliptic operator $\tilde{H}$. The Fr\'echet derivative then becomes $\tilde{H} x$, by identifying the derivative with the corresponding element of the Hilbert space.

Finally, in addition to prescribing the shape of $A$, let $\tilde{H}=\Id$. The energy then becomes $\cH(x)=\frac{1}{2}\|x\|^2$, and $\partial_x\cH(x)=x$. Since we consider a classical particle system consisting of position and momentum, we take $H_0=H_1$. By taking this special case, we see that the port-Hamiltonian system \eqref{eq:ph_spde} becomes the SPDE \eqref{eq:spde}, which is the object of our study.

\subsection{Invariant Measures}

The study of invariant measures plays an important role in the analysis of the long-time behaviour of the process. The existence of an invariant measure is the first step in the analysis of the limiting behaviour, as the invariant measure is the candidate for the limiting distribution of the process.

Assume that we are given a Markov process $(X_t^x)_{t\geq 0}$ with transition probabilities $p_t(x,dy)=\fP(X_t^x\in dy)$. As application, we have in mind the mild solution of Equation \eqref{eq:spde}. We also denote by $(p_t)_t$ the transition semigroup of the process, i.e., for any bounded measurable function $f\colon H\to\R$, set
\begin{equation}\label{eq:semigroup}
    p_tf(x):=\int_H f(y)p_t(x,dy),\ t\geq 0,x\in H.
\end{equation}
Under the conditions of Theorem \ref{thm:main_intro}, for any $X_0\in L^2(\Omega,\cF_0,\fP;H)$, there exists a mild solution to equation \eqref{eq:spde}. Furthermore, for $X_0,Y_0\in L^2(\Omega,\cF_0,\fP;H)$ with corresponding mild solutions $(X_t)_{t\geq 0}$ and $(Y_t)_{t\geq 0}$, we have 
\begin{displaymath}
    \E[\|X_t-Y_t\|^2]\leq C(T)\E[\|X_0-Y_0\|^2].
\end{displaymath}
This implies that the semigroup defined by \eqref{eq:semigroup} is $C_b$-Feller. For a more detailed view on integration w.r.t. Levy processes as well as solution theory, see e.g. \cite{amr09, book_mr14}.

For $t\geq 0$, denote by $p_t^\ast$ the adjoint to the operator $p_t$, acting on the space of probability measures as
\begin{displaymath}
    p_t^\ast\rho(dx)=\int_Hp_t(y,dx)\rho(dy).
\end{displaymath}
We call a probability measure $\pi$ on $(H,\cB(H))$ invariant measure for the process $(X_t)_t$ (or for the semigroup $(p_t)_t$) if for all $t\geq 0$, we have $p_t^\ast\pi=\pi$.

The goal of this work is to investigate under which conditions there exists a unique invariant measure for the mild solution $(X_t^x)_t$ to \eqref{eq:ph_spde}. To this end, we consider the Wasserstein-2-space $\cP_2(H)$ of all probability measures on $(H,\cB(H))$ with finite second moments, equipped with the Wasserstein-2-distance
\begin{displaymath}
    W_2(\rho,\tilde{\rho})=\Big(\inf_{G\in\Pi(\rho,\tilde{\rho})}\int_{H\times H}\|x-y\|^2G(dx,dy)\Big)^{\frac{1}{2}}.
\end{displaymath}
where $\Pi(\rho,\tilde{\rho})$ denotes the set of all couplings between $\rho$ and $\tilde{\rho}$. It is well-known that $(\cP_2(H),W_2)$ it itself a Polish space if $H$ is Polish. For a more detailed introduction to Wasserstein distances, see e.g. \cite[Section 6]{villani_oldnew}.

For the existence of a unique invariant measure for the solution, we have the following result. It can be found for various cases in \cite{daprato_zabczyk,book_mr14,peszat_zabczyk,mr23}. For convenience, we sketch the proof here.

\begin{Theorem}\label{thm:uim}
Assume the following:
\begin{enumerate}
    \item $\hat{F}$ and $\hat{\sigma}$ are Borel measurable, $\hat{\gamma}$ is measurable from $\cB(H)\otimes\cE$ to $\cB(H)$.
    \item There exist constants $L_F,L_\sigma,L_\gamma>0$ such that
    \begin{equation}\label{eq:lipschitz_invariant}
        \begin{split}
        \|\hat{F}(x)-\hat{F}(y)\|_H^2&\leq L_F\|x-y\|_H^2
        \\
        \|\hat{\sigma}(x)-\hat{\sigma}(y)\|_{L_2^0(H)}^2&\leq L_\sigma\|x-y\|_H^2
        \\
        \int_E\|\hat{\gamma}(x,\eta)-\hat{\gamma}(y,\eta)\|_H^2\mu(d\eta)&\leq L_\gamma\|x-y\|_H^2
        \end{split}
    \end{equation}
    \item $\displaystyle\int\|\hat{\gamma}(0,\eta)\|_H^2\mu(d\eta)<\infty$
    \item $A$ generates a pseudo-contraction semigroup, i.e. there exists $\omega\in\R$ such that $\|S_A(t)\|\leq e^{\omega t}$ for all $t\geq 0$.
    \item We assume that the drift fulfills the following dissipativity condition: There exists $\alpha>0$ such that
    \begin{displaymath}
        \langle A(x-y),x-y\rangle+\langle \hat{F}(x)-\hat{F}(y),x-y\rangle\leq-\alpha\|x-y\|^2\text{ for all }x,y\in\cD(A)
    \end{displaymath}
    \item $\varepsilon:=2\alpha-L_\gamma-L_\sigma>0$.
    \item For all $x\in H$, we have
    \begin{displaymath}
        \int_E\|\hat{\gamma}(x,\eta)\|_H^4\mu(d\eta)<\infty.
    \end{displaymath}
\end{enumerate}
Then for any $\rho,\tilde{\rho}\in\cP_2(H)$, we have
\begin{displaymath}
    W_2(p_t^\ast\rho,p_t^\ast\tilde{\rho})\leq W_2(\rho,\tilde{\rho})e^{-\varepsilon t/2}
\end{displaymath}
In particular, the Markov process determined by \eqref{eq:spde} has a unique invariant measure $\pi$. This measure has finite second moments and it holds that
\begin{displaymath}
    W_2(p_t^\ast\rho,\pi)\leq W_2(\rho,\pi)e^{-\varepsilon t/2}
\end{displaymath}
for each $\rho\in\cP_2(H)$.
\end{Theorem}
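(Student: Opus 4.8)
The plan is to derive the entire statement from a single pathwise estimate: for any two mild solutions $(X_t)_{t\geq 0}$, $(Y_t)_{t\geq 0}$ of \eqref{eq:spde} built on the same stochastic basis with the same driving pair $(W,N)$ but with initial data $X_0,Y_0\in L^2(\Omega,\cF_0,\fP;H)$ (independent of the noise), one should have
\begin{equation}\label{eq:pathwiseL2plan}
    \E\big[\|X_t-Y_t\|^2\big]\leq e^{-\varepsilon t}\,\E\big[\|X_0-Y_0\|^2\big],\qquad t\geq 0 .
\end{equation}
Granting \eqref{eq:pathwiseL2plan}, I would argue as follows. Given $\rho,\tilde\rho\in\cP_2(H)$, realize an optimal $W_2$-coupling of $\rho,\tilde\rho$ as the joint law of $(X_0,Y_0)$; then $(X_t,Y_t)$ is a coupling of $p_t^\ast\rho$ and $p_t^\ast\tilde\rho$, whence $W_2(p_t^\ast\rho,p_t^\ast\tilde\rho)^2\leq\E\|X_t-Y_t\|^2\leq e^{-\varepsilon t}W_2(\rho,\tilde\rho)^2$, which is the asserted contraction. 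Since $(\cP_2(H),W_2)$ is complete, for a fixed $t_0>0$ the map $p_{t_0}^\ast$ is a strict contraction and Banach's theorem furnishes a unique fixed point $\pi\in\cP_2(H)$; as $p_{t_0}^\ast(p_s^\ast\pi)=p_s^\ast(p_{t_0}^\ast\pi)=p_s^\ast\pi$, the measure $p_s^\ast\pi$ is again fixed by $p_{t_0}^\ast$, hence equals $\pi$ for every $s\geq 0$, provided $p_s^\ast\pi\in\cP_2(H)$ (a moment bound handled below). Uniqueness among all invariant measures with finite second moments is then immediate from $W_2(\pi,\pi')=W_2(p_t^\ast\pi,p_t^\ast\pi')\leq e^{-\varepsilon t/2}W_2(\pi,\pi')$, and the exponential rate from $W_2(p_t^\ast\rho,\pi)=W_2(p_t^\ast\rho,p_t^\ast\pi)\leq e^{-\varepsilon t/2}W_2(\rho,\pi)$.

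To obtain \eqref{eq:pathwiseL2plan} I would apply It\^o's formula to $t\mapsto\|Z_t\|^2$ where $Z_t=X_t-Y_t$. The finite-variation part produces $2\langle AZ_t+\hat F(X_t)-\hat F(Y_t),Z_t\rangle\leq-2\alpha\|Z_t\|^2$ by the dissipativity hypothesis~(5); the Gaussian correction is $\|\hat\sigma(X_t)-\hat\sigma(Y_t)\|_{L_2^0(H)}^2\leq L_\sigma\|Z_t\|^2$; compensating the Poisson integral of $\|Z_{t-}+\Delta\|^2-\|Z_{t-}\|^2$, with $\Delta=\hat\gamma(X_{t-},\eta)-\hat\gamma(Y_{t-},\eta)$, cancels the term linear in $\Delta$ and leaves the quadratic remainder $\int_E\|\Delta\|^2\mu(d\eta)\leq L_\gamma\|Z_t\|^2$; and the stochastic integrals are (local) martingales of zero expectation once integrability is in place. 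Summing, $\tfrac{d}{dt}\E\|Z_t\|^2\leq(-2\alpha+L_\sigma+L_\gamma)\E\|Z_t\|^2=-\varepsilon\,\E\|Z_t\|^2$, and Gr\"onwall's inequality yields \eqref{eq:pathwiseL2plan}. Note that the block form \eqref{eq:operatorA_intro} of $A$ and the splitting $H=H_0\oplus H_1$ are irrelevant here; they matter only when verifying, for Theorem~\ref{thm:main_intro}, that $\alpha=\min(\lambda_0,\lambda_1)-\beta-\sqrt{L_F}$ is an admissible dissipativity constant.

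The point requiring care — and what I expect to be the main obstacle — is that this It\^o computation is not literally valid for a mild solution, since $Z_t$ need not lie in $\cD(A)$, so $\langle AZ_t,Z_t\rangle$ is undefined. I would circumvent this by Yosida approximation: with $R_\lambda=(\lambda I-A)^{-1}$ and $A_\lambda=\lambda A R_\lambda$ bounded for large $\lambda$, replace $A$ in \eqref{eq:spde} by $A_\lambda$ (regularizing the coefficients and initial datum by $\lambda R_\lambda$ where convenient) to get strong solutions $X^\lambda,Y^\lambda$ for which It\^o's formula is rigorous; $A_\lambda$ inherits from~(5) a dissipativity bound with constant $\alpha_\lambda\to\alpha$, so \eqref{eq:pathwiseL2plan} holds for $X^\lambda,Y^\lambda$, and one then sends $\lambda\to\infty$. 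Pushing the limit through needs uniform-in-$\lambda$ moment control: the pseudo-contraction assumption~(4) provides the second-moment bounds behind the $L^2$-convergence $X^\lambda\to X$, $Y^\lambda\to Y$, while the fourth-moment hypothesis~(7) is exactly what renders the jump stochastic integrals genuine square-integrable martingales and lets the jump-compensator term converge — the most technical part of the argument. Alternatively, one may simply quote an It\^o formula for the squared norm of mild solutions from \cite{daprato_zabczyk,book_mr14,peszat_zabczyk} and dispense with the approximation.

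It remains to produce at least one invariant measure and to check the finite-second-moment property invoked above. Running the same It\^o/Yosida computation on a single solution $\|X_t^x\|^2$, but with~(5) taken at $y=0$, the Lipschitz bounds, Young's inequality and $\int_E\|\hat\gamma(0,\eta)\|^2\mu(d\eta)<\infty$ from~(3), and absorbing the constant contributions into the dissipation, I expect an estimate $\tfrac{d}{dt}\E\|X_t^x\|^2\leq-\varepsilon'\,\E\|X_t^x\|^2+C$ for some $\varepsilon'>0$, hence $\sup_{t\geq 0}\E\|X_t^x\|^2<\infty$. Therefore $\sup_n W_2(p_n^\ast\delta_x,\delta_x)<\infty$, and $W_2(p_{n+m}^\ast\delta_x,p_n^\ast\delta_x)=W_2\big(p_n^\ast(p_m^\ast\delta_x),p_n^\ast\delta_x\big)\leq e^{-\varepsilon n/2}W_2(p_m^\ast\delta_x,\delta_x)$ shows that $(p_n^\ast\delta_x)_{n\in\N}$ is $W_2$-Cauchy and converges to some $\pi\in\cP_2(H)$; combining $W_2$-continuity of $p_s^\ast$ with $W_2(p_{n+s}^\ast\delta_x,p_n^\ast\delta_x)\to 0$ gives $p_s^\ast\pi=\lim_n p_{n+s}^\ast\delta_x=\pi$ for all $s\geq 0$, so $\pi$ is invariant and has finite second moments. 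Together with the uniqueness and the contraction rate obtained in the reduction above, this completes the plan.
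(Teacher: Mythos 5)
Your proposal is correct in substance, and its skeleton coincides with the paper's: both arguments rest on the exponential mean-square stability estimate $\E[\|X_t-Y_t\|^2]\le e^{-\varepsilon t}\,\E[\|X_0-Y_0\|^2]$ (the paper's \eqref{eq:mean_square}), deduce from it the $W_2$-contraction of $p_t^\ast$, and then use completeness of $(\cP_2(H),W_2)$ to produce and identify the unique invariant measure. The differences lie at the two ends. The paper does not prove the mean-square estimate at all: it quotes it, for deterministic initial points, from \cite{farkas} (see also \cite{book_mr14,mr23}), whereas you re-derive it via It\^o's formula and a Yosida approximation; and the paper passes from the pointwise kernel contraction to contraction of $p_t^\ast$ via Lemma \ref{wasserstein_lemma} and a telescoping Cauchy-sequence argument, whereas you couple the initial data optimally, drive both solutions of \eqref{eq:spde} by the same noise, and then invoke Banach's fixed point theorem, adding the explicit a priori bound $\sup_{t\ge0}\E\|X_t^x\|^2<\infty$ to guarantee that $p_t^\ast$ maps $\cP_2(H)$ into itself --- a point the paper leaves implicit when it uses finiteness of $W_2(p_1^\ast\rho,\rho)$. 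Your route is thus more self-contained and more careful on the moment issue. One caveat on the Yosida step: the claim that replacing $A$ by $A_\lambda$ yields a combined-drift dissipativity bound ``with constant $\alpha_\lambda\to\alpha$'' is not literally uniform in the state variables, because hypothesis (5) couples $A$ with $\hat F$: decoupling them by Cauchy--Schwarz degrades the constant to $2\sqrt{L_F}-\alpha$, while keeping them coupled produces error terms involving $\hat F(x)-\hat F(J_\lambda x)$ and $(x-y)-J_\lambda(x-y)$, with $J_\lambda=\lambda(\lambda-A)^{-1}$, which vanish only pointwise as $\lambda\to\infty$. The standard remedy (as in \cite{daprato_zabczyk,farkas,book_mr14}) is to keep $A$ and instead regularize the solution and coefficients by $J_\lambda$, so that (5) is applied directly to elements of $\cD(A)$ and the errors are absorbed inside the expectation by the uniform second-moment bounds and dominated convergence; alternatively one simply quotes the estimate, as the paper does and as you yourself offer, so this is a presentational rather than a fatal gap.
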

For the proof, recall the following properties of the Wasserstein distance:
\begin{Lemma}\label{wasserstein_lemma}
\begin{enumerate}
    \item The infimum is attained, i.e. there exists $\hat{G}\in\Pi(\rho,\tilde{\rho})$ such that
    \begin{displaymath}
     W_p(\rho,\tilde{\rho})=\Big(\int_{H\times H}\|x-y\|^p \hat{G}(dx,dy)\Big)^{1/p}
    \end{displaymath}
    \item The following estimate holds: For all kernels $p$ and all couplings $G\in\Pi(\rho,\tilde{\rho})$,
    \begin{displaymath}
     W_p\Big(\int_H p(y,\cdot)\rho(dy),\int_H p(y,\cdot)\tilde{\rho}(dy)\Big)\leq\int_{H\times H}W_p(p(x,\cdot),p(y,\cdot))G(dx,dy)
    \end{displaymath}
\end{enumerate}
\end{Lemma}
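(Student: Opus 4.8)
Both items are classical facts of optimal transport (see \cite{villani_oldnew}), so I would only sketch the arguments; the single delicate point is a measurable selection, and everything else is soft.

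\emph{Part (1): existence of an optimal coupling.} Assume $\rho,\tilde\rho\in\cP_p(H)$, so that $W_p(\rho,\tilde\rho)<\infty$ (the product $\rho\otimes\tilde\rho$ is already a coupling of finite cost; if $W_p=\infty$ the claim is vacuous). The plan is a direct-method argument. First, $\Pi(\rho,\tilde\rho)$ is nonempty and uniformly tight: since $H$ is Polish, $\rho$ and $\tilde\rho$ are tight, so given $\delta>0$ one picks compacta $K_0,K_1\subset H$ with $\rho(K_0^c),\tilde\rho(K_1^c)<\delta/2$, whence $G\big((K_0\times K_1)^c\big)\leq\rho(K_0^c)+\tilde\rho(K_1^c)<\delta$ for every $G\in\Pi(\rho,\tilde\rho)$. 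Since $\Pi(\rho,\tilde\rho)$ is also closed under weak convergence (the two marginal maps $\mathcal P(H\times H)\to\mathcal P(H)$ are weakly continuous), Prokhorov's theorem makes it weakly sequentially compact. Second, the cost $G\mapsto\int_{H\times H}\|x-y\|^pG(dx,dy)$ is weakly lower semicontinuous: $(x,y)\mapsto\|x-y\|^p$ is nonnegative and continuous, hence an increasing pointwise limit of the bounded continuous functions $\|x-y\|^p\wedge M$, so if $G_n\to G$ weakly then $\int(\|x-y\|^p\wedge M)G_n\to\int(\|x-y\|^p\wedge M)G$, and letting $M\to\infty$ gives $\int\|x-y\|^pG\leq\liminf_n\int\|x-y\|^pG_n$. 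A lower semicontinuous functional attains its infimum on a sequentially compact set; applying this to a minimizing sequence in $\Pi(\rho,\tilde\rho)$ produces the optimal $\hat G$.

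\emph{Part (2): the gluing inequality.} Put $\mu:=\int_Hp(y,\cdot)\rho(dy)$ and $\tilde\mu:=\int_Hp(y,\cdot)\tilde\rho(dy)$, and fix $G\in\Pi(\rho,\tilde\rho)$. By Part (1), for every $(x,y)\in H\times H$ there is a coupling $\pi_{x,y}\in\Pi(p(x,\cdot),p(y,\cdot))$ attaining $W_p(p(x,\cdot),p(y,\cdot))$, and I would choose the assignment $(x,y)\mapsto\pi_{x,y}$ to be Borel measurable into $\cP(H\times H)$ (weak topology) by a measurable selection theorem. Then
\begin{displaymath}
    \Gamma(dz,dw):=\int_{H\times H}\pi_{x,y}(dz,dw)\,G(dx,dy)
\end{displaymath}
defines a probability measure on $H\times H$; since the first marginal of $G$ is $\rho$ and that of $\pi_{x,y}$ is $p(x,\cdot)$, the first marginal of $\Gamma$ is $\int_Hp(x,\cdot)\rho(dx)=\mu$, and symmetrically its second marginal is $\tilde\mu$, so $\Gamma\in\Pi(\mu,\tilde\mu)$. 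Using $\Gamma$ as a competitor and Fubini,
\begin{displaymath}
    W_p(\mu,\tilde\mu)^p\leq\int_{H\times H}\|z-w\|^p\Gamma(dz,dw)=\int_{H\times H}\Big(\int_{H\times H}\|z-w\|^p\pi_{x,y}(dz,dw)\Big)G(dx,dy)=\int_{H\times H}W_p\big(p(x,\cdot),p(y,\cdot)\big)^pG(dx,dy),
\end{displaymath}
which is the asserted estimate in its natural $p$-homogeneous form; this is exactly what is used afterwards, where one combines it with a contraction bound $W_p(p(x,\cdot),p(y,\cdot))\leq L\|x-y\|$ and an optimal $G$ from Part (1) to get $W_p(\mu,\tilde\mu)\leq L\,W_p(\rho,\tilde\rho)$.

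\emph{Main obstacle.} The soft ingredients — nonemptiness, uniform tightness, weak lower semicontinuity of the cost, and existence of a minimizer — are routine. The one step needing care is the measurable selection $(x,y)\mapsto\pi_{x,y}$ of optimal couplings in Part (2), together with the measurability of $(x,y)\mapsto W_p(p(x,\cdot),p(y,\cdot))$ needed to apply Fubini to $\Gamma$; this is handled by a Kuratowski--Ryll-Nardzewski-type selection (the optimal-coupling multifunction has nonempty closed values in the Polish space $\cP(H\times H)$ and is measurable) or, more directly, by invoking the construction already carried out in \cite{villani_oldnew}.
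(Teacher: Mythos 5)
The paper gives no proof of this lemma at all: it is recalled as a standard property of the Wasserstein distance, with \cite{villani_oldnew} as the background reference, and the proof environment that follows it in the text is already the proof of Theorem \ref{thm:uim}. So your write-up is a self-contained substitute rather than an alternative to an argument in the paper. Your Part (1) is the standard direct-method proof (nonemptiness and tightness of $\Pi(\rho,\tilde{\rho})$, closedness under weak convergence, Prokhorov, lower semicontinuity of the cost) and is correct; the measurable-selection issue you isolate in Part (2) is indeed the only delicate ingredient, and it is handled exactly as you indicate (measurable selection of optimal plans, cf.\ \cite{villani_oldnew}).

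One substantive remark on Part (2): what you actually prove is the convexity inequality in its $p$-homogeneous form,
\begin{displaymath}
  W_p\Big(\int_H p(x,\cdot)\rho(dx),\int_H p(y,\cdot)\tilde{\rho}(dy)\Big)^p\leq\int_{H\times H}W_p\big(p(x,\cdot),p(y,\cdot)\big)^p\,G(dx,dy),
\end{displaymath}
which is not the inequality as literally stated in the lemma (no $p$-th powers on the right). For $p>1$ the literal first-power version is strictly stronger and is in fact false in general: on $H=\R$ with $p=2$, take distinct points $x_1,x_2,y_1,y_2$, a kernel with $p(x_1,\cdot)=p(x_2,\cdot)=p(y_2,\cdot)=\delta_0$ and $p(y_1,\cdot)=\delta_1$, $\rho=\tfrac12(\delta_{x_1}+\delta_{x_2})$, $\tilde{\rho}=\tfrac12(\delta_{y_1}+\delta_{y_2})$, and $G=\tfrac12\big(\delta_{(x_1,y_1)}+\delta_{(x_2,y_2)}\big)$; then the left-hand side is $W_2\big(\delta_0,\tfrac12\delta_0+\tfrac12\delta_1\big)=2^{-1/2}$ while the right-hand side is $\tfrac12 W_2(\delta_0,\delta_1)=\tfrac12$. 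So your $p$-power formulation is the correct one, and it is also all that the application requires: inserting $W_2(p_t(x,\cdot),p_t(y,\cdot))^2\leq e^{-\varepsilon t}\|x-y\|^2$ into it yields \eqref{proof_estimate_1} directly (the Cauchy--Schwarz step in that display then becomes superfluous). The only adjustment to make is to state explicitly that you are proving (and that the lemma should read) the $p$-th power version, or restrict the first-power form to $p=1$.
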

\begin{proof}
Let $X_t^x,X_t^y$ be the mild solutions to \eqref{eq:spde} with initial value $x$ and $y$, respectively. By setting $P_1\equiv 0$ in \cite[Prop. 3.5]{farkas}, the above conditions imply that
\begin{equation}\label{eq:mean_square}
    \E[\|X_t^x-X_t^y\|^2]\leq e^{-\varepsilon t}\|x-y\|_H^2
\end{equation}
for all $x,y\in H$, see also \cite{book_mr14, mr23}. Property \eqref{eq:mean_square} is also called exponential mean-square stability. Using this property, we show that the corresponding evolution of probability measures is Cauchy with respect to the Wasserstein-2-distance.
Fix $t$ and let $G_t=\cL(X_t^x,X_t^y)$ be the joint distribution of $(X_t^x,X_t^y)$. Then $G_t$ is a coupling of $X_t^x$ and $X_t^y$. By the definition of the Wasserstein distance and the stability estimate \eqref{eq:mean_square}, we have
\begin{align*}
 W_2(p_t(x,\cdot),p_t(y,\cdot))&=\inf_{G\in\Pi(p_t(x,\cdot),p_t(y,\cdot))}\Big(\int_{H\times H}\|x-y\|^2G(dx,dy)\Big)^{1/2}
 \\
 &\leq\Big(\int_{H\times H}\|x-y\|^2G_t(dx,dy)\Big)^{1/2}
 \\
 &=\big(\E[\|X_t^x-X_t^y\|^2]\big)^{1/2}
 \\
 &\leq e^{-\frac{\varepsilon}{2}t}\|x-y\|,
\end{align*}
Using Lemma \ref{wasserstein_lemma}, we obtain for two arbitrary probability measures $\rho,\tilde{\rho}$
\begin{equation}\label{proof_estimate_1}
\begin{split}
    W_2(p_t^\ast\rho,p_t^\ast\tilde{\rho})&\leq\int_{H\times H}W_2(p_t(x,\cdot),p_t(y,\cdot))\hat{G}(dx,dy)
    \\
    &\leq e^{-\frac{\varepsilon}{2}t}\int_{H\times H}\|x-y\|\hat{G}(dx,dy)
    \\
    &\leq e^{-\frac{\varepsilon}{2}t}\Big(\int_{H\times H}\|x-y\|^2\hat{G}(dx,dy)\Big)^{1/2}
    \\
    &=e^{-\frac{\varepsilon}{2}t}W_2(\rho,\tilde{\rho})
\end{split}
\end{equation}
where $\hat{G}$ is the coupling given by Lemma \ref{wasserstein_lemma}.

Next, we show that the sequence $(p_k^\ast\rho)_{k\geq 1}$ is Cauchy for any arbitrary $\rho\in\cP_2(H)$. Let $l>k$. Then
\begin{align*}
    W_2(p_k^\ast\rho,p_l^\ast\rho)&\leq\sum_{j=k}^{l-1}W_2(p_{j+1}^\ast\rho,p_j^\ast\rho)\stackrel{\eqref{proof_estimate_1}}\leq\sum_{j=k}^{l-1}e^{-\frac{\varepsilon}{2}j}W_2(p_1^\ast\rho,\rho)\xrightarrow{k\to\infty}0.
\end{align*}
Since $\cP_2(H)$ is complete w.r.t. $W_2$, there exists a limit $\pi\in\cP_2(H)$. Next, we show that $\pi$ is invariant, i.e., $p_t^\ast\pi=\pi$ for all $t>0$. But this holds since
\begin{align*}
    W_2(p_t^\ast\pi,\pi)&\leq W_2(p_t^\ast\pi,p_t^\ast p_k^\ast\rho)+W_2(p_t^\ast p_k^\ast\rho,p_k^\ast\rho)+W_2(p_k^\ast\rho,\pi)
    \\
    &\leq e^{-\frac{\varepsilon}{2}t}\underbrace{W_2(\pi,p_k^\ast\rho)}_{\to 0, k\to\infty}+\underbrace{e^{\frac{\varepsilon}{2}k}}_{\to 0, k\to\infty}W_2(p_t^\ast\rho,\rho)+\underbrace{W_2(p_k^\ast\rho,\pi)}_{\to 0,k\to\infty}
    \\
    &\to 0, k\to\infty.
\end{align*}
Hence, $\pi$ is an invariant measure. It is left to check that $\pi$ is unique. Let $\tilde{\pi}$ be another invariant measure. We have
\begin{displaymath}
 W_2(\pi,\tilde{\pi})=W_2(p_t^\ast\pi,p_t^\ast\tilde{\pi})\leq e^{-\frac{\varepsilon}{2}t}W_2(\pi,\tilde{\pi})
\end{displaymath}
Since $t>0$ is arbitrary, uniqueness follows.
\end{proof}

\section{Proof of Theorem \ref{thm:main_intro}}

This section is now devoted to the proof of the main result. As a reminder, let $(\Omega,\cF, (\cF_t)_t,\fP)$ be a filtered probability space s.t. all expressions appearing are defined. We consider the SPDE
\begin{displaymath}
 dX_t=(AX_t+\hat{F}(X_t))dt+\hat{\sigma}(X_t)dW_t+\int_E\hat{\gamma}(X_t,\eta)\tilde{N}(dt,d\eta)
\end{displaymath}
where $(W_t)_t$ is a $Q$-Wiener process, $N$ is a Poisson process with compensator $\nu(dt,d\eta)=dt\mu(d\eta)$, the functions $\hat{F}\colon H\to H,\hat{\sigma}\colon H\to H,\hat{\gamma}\colon H\times E\to H$ are measurable on their respective spaces and fulfill the Lipschitz conditions \eqref{eq:lipschitz_intro}, and the operator $A$ has the following form:
\begin{equation}\label{eq:operatorA}
    A=\begin{pmatrix}
        -R_0&D_0
        \\
        D_1&-R_1
    \end{pmatrix}=:D-R
\end{equation}
We assume the following bound:
\begin{Assumption}\label{assumption_R}
 For $i=0,1$, $R_i\colon\cD(R_i)\subset H_i\to H_i$ and there exists $\lambda_i\in\R$ such that
 \begin{displaymath}
     \langle R_ix_i,x_i\rangle_{H_i}\geq\lambda_i\|x_i\|_{H_i}^2.
 \end{displaymath}
\end{Assumption}

\begin{Proposition}\label{prop:dc}
Consider an operator of the form \eqref{eq:operatorA} which fulfills Assumption \ref{assumption_R}. Assume that the Lipschitz condition \eqref{eq:lipschitz_intro} holds. Furthermore, assume that there exists $\beta\in\R$ such that for all $x\in \cD(A)$,
\begin{equation}\label{eq:condition_beta}
    \langle Dx,x\rangle\leq\beta\|x\|^2.
\end{equation}
Set $\alpha:=\min(\lambda_0,\lambda_1)-\beta$. Then
\begin{displaymath}
    \langle A(x-y),x-y\rangle\leq-\alpha\|x-y\|^2.
\end{displaymath}
Additionally, if we assume that
\begin{equation}\label{eq:condition_coefficients}
 a:=\alpha-\sqrt{L_F}=\min(\lambda_0,\lambda_1)-\beta-\sqrt{L_F}>0,
\end{equation}
then the drift is dissipative: For all $x,y\in \cD(A)$,
\begin{equation}\label{eq:gdc}
    \langle Ax-Ay,x-y\rangle+\langle \hat{F}(x)-\hat{F}(y),x-y\rangle\leq -a\|x-y\|_H^2.
\end{equation}
\end{Proposition}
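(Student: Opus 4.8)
The plan is to decompose $\langle A(x-y),x-y\rangle$ into the contribution of the block-diagonal part $R$ and that of the off-diagonal part $D$, using $A=D-R$ together with the orthogonality $H_0\perp H_1$. Set $z:=x-y$; since $\cD(A)$ is a linear subspace, $z\in\cD(A)$, so all hypotheses apply to $z$. Because $R$ is block-diagonal we have $\langle Rz,z\rangle=\langle R_0z_0,z_0\rangle_{H_0}+\langle R_1z_1,z_1\rangle_{H_1}$, and the Pythagorean identity recorded in Section 2 gives $\|z\|^2=\|z_0\|^2+\|z_1\|^2$.

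Next I would apply Assumption \ref{assumption_R} to each block: $\langle R_iz_i,z_i\rangle\geq\lambda_i\|z_i\|^2\geq\min(\lambda_0,\lambda_1)\|z_i\|^2$ for $i=0,1$. Summing the two estimates and using the Pythagorean identity yields $\langle Rz,z\rangle\geq\min(\lambda_0,\lambda_1)\|z\|^2$. Combined with the hypothesis \eqref{eq:condition_beta}, namely $\langle Dz,z\rangle\leq\beta\|z\|^2$, and the decomposition $A=D-R$, this gives $\langle Az,z\rangle\leq\beta\|z\|^2-\min(\lambda_0,\lambda_1)\|z\|^2=-\alpha\|z\|^2$, which is the first assertion.

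For the dissipativity of the full drift I would add the nonlinear term. By Cauchy--Schwarz and the first Lipschitz estimate in \eqref{eq:lipschitz_intro}, $\langle\hat{F}(x)-\hat{F}(y),x-y\rangle\leq\|\hat{F}(x)-\hat{F}(y)\|\,\|x-y\|\leq\sqrt{L_F}\,\|x-y\|^2$. Adding this to the bound $\langle A(x-y),x-y\rangle\leq-\alpha\|x-y\|^2$ just established gives $\langle Ax-Ay,x-y\rangle+\langle\hat{F}(x)-\hat{F}(y),x-y\rangle\leq(-\alpha+\sqrt{L_F})\|x-y\|^2=-a\|x-y\|^2$, and $a>0$ is precisely the standing assumption \eqref{eq:condition_coefficients}.

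There is no genuine obstacle here; the proposition is essentially a bookkeeping exercise. The only points that need a little care are checking that $z=x-y$ remains in $\cD(A)$ before the hypotheses are invoked, and organizing the estimate on $\langle Rz,z\rangle$ so that one first passes to the common constant $\min(\lambda_0,\lambda_1)$ on each block and only afterwards recombines via $\|z_0\|^2+\|z_1\|^2=\|z\|^2$ — attempting to bound $\langle Rz,z\rangle$ in one stroke obscures where the minimum enters. One must likewise keep the sign convention $A=D-R$ straight, since a lower bound on $\langle Rz,z\rangle$ translates into an upper bound on $-\langle Rz,z\rangle$.
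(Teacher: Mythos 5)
Your proof is correct and follows essentially the same route as the paper: expand $\langle A(x-y),x-y\rangle$ via the block structure $A=D-R$ and the orthogonal decomposition, bound the $R$-blocks below by $\min(\lambda_0,\lambda_1)$ using Assumption \ref{assumption_R}, apply \eqref{eq:condition_beta} to the $D$-part, and then add the Cauchy--Schwarz/Lipschitz estimate $\langle\hat{F}(x)-\hat{F}(y),x-y\rangle\leq\sqrt{L_F}\|x-y\|^2$. No further comments are needed.
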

\begin{Remark}\label{rk:cond_D}
The condition on $D$ is especially fulfilled in the following two cases:
\begin{enumerate}
    \item $D_0,D_1$ bounded: Assuming that the operators are bounded, we obtain
    \begin{align*}
        \langle D(x-y),x-y\rangle&=\langle D_0(P_1x-P_1y),P_0x-P_0y\rangle+\langle D_1(P_0x-P_0y),P_1x-P_1y\rangle
        \\
        &\leq\|D_0(P_1x-P_1y)\|\|P_0x-P_0y\|+\|D_1(P_0x-P_0y)\|\|P_1x-P_1y\|
        \\
        &\leq(\|D_0\|+\|D_1\|)\|P_0x-P_0y\|\|P_1x-P_1y\|
        \\
        &\leq\frac{\|D_0\|+\|D_1\|}{2}(\|P_0x-P_0y\|^2+\|P_1x-P_1y\|^2)
    \end{align*}
    for all $x,y\in H$, where we used Young's inequality in the last step. Hence, Condition \eqref{eq:condition_beta} holds with
    \begin{displaymath}
        \beta=\frac{\|D_0\|+\|D_1\|}{2}.
    \end{displaymath}
    \item Case $D_0^\ast=-D_1$: In this case, the matrix $D$ is skew-symmetric and the off-diagonal terms in the proof below vanish, i.e., condition \eqref{eq:condition_beta} holds with $\beta=0$. This is exactly the case in a classical Hamiltonian or port-Hamiltonian system consisting of position and velocity coordinates.
\end{enumerate}
\end{Remark}
\begin{proof}[Proof of Proposition \ref{prop:dc}]
We first estimate the $A$-term, then use the Lipschitz continuity of $F$ for the remainder. By a slight abuse of notation, we write
\begin{align*}
    \Big\langle&\begin{pmatrix}-R_0&D_0\\ D_1&-R_1\end{pmatrix}\begin{pmatrix}P_0x-P_0y\\ P_1x-P_1y\end{pmatrix},\begin{pmatrix}P_0x-P_0y\\ P_1x-P_1y\end{pmatrix}\Big\rangle
    \\
    &=\langle-R_0(P_0x-P_0y),P_0x-P_0y\rangle+\langle-R_1(P_1x-P_1y),P_1x-P_1y\rangle
    \\
    &\hspace{20pt}+\langle D_0(P_1x-P_1y),P_0x-P_0y\rangle+\langle D_1(P_0x-P_0y),P_1x-P_1y\rangle
\end{align*}
Using Assumption \ref{assumption_R} and Condition \eqref{eq:condition_beta}, we obtain
\begin{align*}
    \langle A(x-y),x-y\rangle&=\langle-R(x-y),x-y\rangle+\langle D(x-y),x-y\rangle
    \\
    &=\langle -R_0(P_0x-P_0y),P_0x-P_0y\rangle+\langle -R_1(P_1x-P_1y),P_1x-P_1y\rangle
    \\
    &\hspace{20pt}+\langle D(x-y),x-y\rangle
    \\
    &\leq-\lambda_0\|P_0x-P_0y\|^2-\lambda_1\|P_1x-P_1y\|^2+\beta\|x-y\|^2
    \\
    &\leq-\min(\lambda_0,\lambda_1)(\|P_0x-P_0y\|^2+\|P_1x-P_1y\|^2)+\beta\|x-y\|^2
    \\
    &=-\big(\min(\lambda_0,\lambda_1)-\beta\big)\|x-y\|^2
\end{align*}
Combining the above calculations with the Lipschitz bound on $\hat{F}$, we obtain
\begin{align*}
    \langle& A(x-y),x-y\rangle+\langle\hat{F}(x)-\hat{F}(y),x-y\rangle
    \\
    &\leq-\alpha\|x-y\|^2+\sqrt{L_F}\|x-y\|^2
    \\
    &=-a\|x-y\|^2.
\end{align*}
\end{proof}

\begin{Remark}
 Estimate \eqref{eq:gdc} holds for general coefficients, but to have a useful dissipation, we need Assumption \eqref{eq:condition_coefficients}.
\end{Remark}
Collecting the above statements, we are now ready to state our main result.

\begin{Theorem}\label{thm:main_outro}
Additionally to the assumptions of Proposition \ref{prop:dc}, assume the Lipschitz conditions \eqref{eq:lipschitz_intro} and
\begin{displaymath}
    \varepsilon:=2a-L_\sigma-L_\gamma>0
\end{displaymath}
as well as the moment assumptions
\begin{displaymath}
    \int_E\|\hat{\gamma}(0,\eta)\|^2\mu(d\eta)<\infty\text{ and }\int_E\|\hat{\gamma}(x,\eta)\|^4\mu(d\eta)<\infty\text{ for all }x\in H.
\end{displaymath}
Then the mild solution $(X_t)_{t\geq 0}$ of \eqref{eq:spde} possesses a unique invariant measure $\pi$. Furthermore, $(X_t)_{t\geq 0}$ converges exponentially fast to $\pi$ in Wasserstein 2-distance, i.e.,
\begin{displaymath}
    W_2(\rho_t,\pi)\leq e^{-\frac{\varepsilon}{2}t}W_2(\rho_0,\pi)
\end{displaymath}
where $(\rho_t)_{t\geq 0}$ denotes the law of $(X_t)_{t\geq 0}$ with initial distribution $\rho_0$.
\end{Theorem}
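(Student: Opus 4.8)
The plan is to deduce Theorem \ref{thm:main_outro} directly from Theorem \ref{thm:uim}, taking the dissipativity constant called $\alpha$ there to be the constant $a$ of Proposition \ref{prop:dc}. Four of the seven hypotheses of Theorem \ref{thm:uim} are immediate under the standing assumptions: measurability of $\hat F,\hat\sigma,\hat\gamma$ is part of the setting of Section \ref{subs:setting}; the Lipschitz estimates in hypothesis (2) are exactly \eqref{eq:lipschitz_intro}; and the two moment conditions (3) and (7) on $\hat\gamma$ are assumed verbatim in the statement.

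It remains to check hypotheses (4) and (5). For (5) I would simply invoke Proposition \ref{prop:dc}: under Assumption \ref{assumption_R}, the bound \eqref{eq:condition_beta} on $D$, the Lipschitz property of $\hat F$, and the standing assumption $a=\min(\lambda_0,\lambda_1)-\beta-\sqrt{L_F}>0$, estimate \eqref{eq:gdc} reads
\[
    \langle A(x-y),x-y\rangle+\langle\hat F(x)-\hat F(y),x-y\rangle\le -a\|x-y\|^2
\]
for all $x,y\in\cD(A)$, so (5) holds with $\alpha=a$; hypothesis (6) of Theorem \ref{thm:uim} then becomes $2a-L_\sigma-L_\gamma>0$, which is precisely the hypothesis $\varepsilon>0$ of the present theorem. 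For the pseudo-contraction property (4), I would combine the standing assumption that $A$ generates a strongly continuous semigroup $(S(t))_{t\ge0}$ with the inequality $\langle Ax,x\rangle\le-\alpha_0\|x\|^2$, $\alpha_0:=\min(\lambda_0,\lambda_1)-\beta$, which follows from the first conclusion of Proposition \ref{prop:dc} by setting $y=0$ (note $0\in\cD(A)$, being a subspace, and $\alpha_0>\sqrt{L_F}\ge0$). For $x\in\cD(A)$ one then has $\tfrac{d}{dt}\|S(t)x\|^2=2\langle AS(t)x,S(t)x\rangle\le-2\alpha_0\|S(t)x\|^2$, so Gr\"onwall's inequality gives $\|S(t)x\|\le e^{-\alpha_0 t}\|x\|$, and by density $\|S(t)\|\le e^{-\alpha_0 t}\le e^{\omega t}$ with $\omega:=-\alpha_0<0$; thus (4) holds, and in fact $(S(t))_{t\ge0}$ is a contraction semigroup.

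Once all seven hypotheses are verified, Theorem \ref{thm:uim} gives a unique invariant measure $\pi\in\cP_2(H)$ with finite second moments satisfying $W_2(p_t^\ast\rho,\pi)\le e^{-\varepsilon t/2}W_2(\rho,\pi)$ for every $\rho\in\cP_2(H)$; specializing to $\rho=\rho_0$ and using $\rho_t=p_t^\ast\rho_0$ yields the claimed rate $W_2(\rho_t,\pi)\le e^{-\varepsilon t/2}W_2(\rho_0,\pi)$, which finishes the argument. I do not expect a real obstacle at this stage: the substantive input has already been produced upstream, namely the generalized dissipativity estimate \eqref{eq:gdc} of Proposition \ref{prop:dc} --- which absorbs the $H_0$--$H_1$ coupling carried by the off-diagonal block $D$ into the single constant $\beta$ --- and the abstract Wasserstein contraction of Theorem \ref{thm:uim}, whose engine is the exponential mean-square stability \eqref{eq:mean_square} imported from \cite[Prop.\ 3.5]{farkas}. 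The present step is just the bookkeeping that matches the hypotheses of these two results.
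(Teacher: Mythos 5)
Your proposal is correct and follows the same route as the paper, which proves the theorem simply by combining the dissipativity estimate \eqref{eq:gdc} of Proposition \ref{prop:dc} (with $\alpha=a$) with the abstract contraction result of Theorem \ref{thm:uim}. Your only addition is to spell out hypothesis (4) via the standard Lumer--Phillips/Gr\"onwall argument $\|S(t)\|\le e^{-\alpha_0 t}$, a verification the paper leaves implicit, and that argument is sound.
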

\begin{proof}
    The result is a consequence of the conclusion of Proposition \ref{prop:dc} together with Theorem \ref{thm:uim}.
\end{proof}

\noindent\textbf{Acknowledgements.} We thank B\'alint Farkas for fruitful discussions in the development of this article.

\end{document}